\theoremstyle{plain}
\newtheorem{theorem}{Theorem}[section]
\newtheorem{proposition}[theorem]{Proposition}
\newtheorem{lemma}[theorem]{Lemma}
\newtheorem{corollary}[theorem]{Corollary}
\newtheorem{definition}[theorem]{Definition}
\newtheorem{example}[theorem]{Example}
\newtheorem{main theorem}[theorem]{Main Theorem}
\newlength\savewidth
\newcommand{\svert}{\,|\,}
\newcommand{\im}{{\rm im}}
\newcommand{\Lab}{{\bf Lab}}
\begin{document}
\title[A Recipe for Constructing Non-Hopfian Relatively Hyperbolic Groups with Hopfian Peripheral Subgroups]
{A Recipe for Constructing Non-Hopfian Relatively Hyperbolic Groups with Hopfian Peripheral Subgroups}

\author{Jan Kim}
\address{Department of Mathematics\\
Pusan National University \\
San-30 Jangjeon-Dong, Geumjung-Gu, Pusan, 609-735, Korea}
\email{jankim@pusan.ac.kr}

\author{Tattybubu Arap kyzy}
\address{Department of Mathematics\\
Pusan National University \\
San-30 Jangjeon-Dong, Geumjung-Gu, Pusan, 609-735, Korea}
\email{tattuu\_arapova@pusan.ac.kr}

\author{Donghi Lee}
\address{Department of Mathematics\\
Pusan National University \\
San-30 Jangjeon-Dong, Geumjung-Gu, Pusan, 609-735, Korea}
\email{donghi@pusan.ac.kr}

\subjclass[2020]{{Primary 20F65, 20F06}\\
\indent {The third author was supported by Basic Science Research Program
through the National Research Foundation of Korea(NRF) funded
by the Ministry of Education, Science and Technology(2020R1F1A1A01071067).}}

\begin{abstract}
Very recently, Kim and Lee~\cite{Kim_Lee} presented an example of
a non-Hopfian relatively hyperbolic group with a Hopfian peripheral subgroup,
demonstrating a counterexample to Osin's well-known question~\cite[Problem~5.5]{Osin}.
In this paper, we provide a general construction method using the so-called image extension theorem
to generate non-Hopfian relatively hyperbolic groups with Hopfian peripheral subgroups.
Additionally, we provide two specific examples using this construction method.
\end{abstract}

\maketitle

\section{Introduction}

In 1987, Gromov~\cite{Gromov} introduced the concept of hyperbolic groups
and proposed relatively hyperbolic groups as a generalization of hyperbolic groups.
Subsequently, several authors provided alternative characterizations of relatively hyperbolic groups
(see \cite{Bowditch, Farb, Groves_Manning, Osin}).
When a group $G$ is hyperbolic relative to a collection of subgroups $\mathbb{H}={ H_\lambda}_{\lambda \in \Lambda}$, we refer to $\mathbb{H}$ as a {\it peripheral structure} of $G$,
and any element in $\mathbb{H}$ is referred to as a {\it peripheral subgroup} of $G$,

Recall that a group $G$ is {\it Hopfian} if every epimorphism $G \rightarrow G$ is an automorphism.
Sela proved in his work \cite{Sela} that every torsion-free hyperbolic group is Hopfian.
This result was later extended by Reinfeldt and Weidmann \cite{Reinfeldt_Weidmann}
to encompass any hyperbolic group.
Recently, Fujiwara and Sela \cite{Fujiwara_Sela} presented an alternative proof for
the Hopf property of hyperbolic groups.

In relation to the Hopf property for relatively hyperbolic groups,
Osin~\cite{Osin} posed a question of
whether all relatively hyperbolic groups remain Hopfian
when their peripheral subgroups are Hopfian.
In a recent development, Kim and Lee~\cite{Kim_Lee}
provided a negative answer to this question by constructing
a non-Hopfian relatively hyperbolic group
with a Hopfian peripheral subgroup as follows.

\begin{theorem}[{\cite[Theorem~1.1]{Kim_Lee}}]
\label{thm:the_first_example}
Let $H_0$ be the group given by the presentation
\[
H_0=\langle b, c \svert b^2=c^9=1, \ b^{-1}cb=c^{-1} \rangle,	
\]
and take successively two HNN-extensions from $H_0$ as follows:
\[
H_1=\langle H_0, s \svert s^{-1}bs=bc^{-3}, \ s^{-1}cs=c \rangle;
\]
\begin{equation}
\label{equ:H_2_presentation}
H_2=\langle H_1, t \svert t^{-1}st=s^3 \rangle.
\end{equation}
Next, form the free product $H=H_2 \ast \langle e, f \svert \emptyset \rangle$.
Finally, letting $\langle a \rangle$ be an infinite cyclic group,
take successively two multiple HNN-extensions from $H \ast \langle a \rangle$ as follows:
\begin{subequations}
\begin{align}
\label{equ:K_presentation}
K&=\langle H \ast \langle a \rangle, u, v \svert u^{-1}(bacb^{-1})u=a, \  v^{-1}av=tst^{-1} \rangle;\\
\label{equ:G_presentation}
G&=\langle K, x, y \svert x^{-1}ux=c^3ec^3e^{-1}, \ y^{-1}vy=c^3fc^3f^{-1} \rangle.		
\end{align}	
\end{subequations}
Then $G$ is a non-Hopfian group which is hyperbolic relative to the Hopfian subgroup $H$.
\end{theorem}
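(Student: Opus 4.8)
The plan is to establish the three assertions in turn: (i) $G$ is hyperbolic relative to $H$; (ii) $H$ is Hopfian; (iii) $G$ is not Hopfian.

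For (i), first I would note that $G$ is obtained from $H$ by forming the free product $H*\langle a\rangle$ and then performing four successive HNN-extensions, each along an infinite cyclic associated pair, namely $\langle bacb^{-1}\rangle\leftrightarrow\langle a\rangle$, then $\langle a\rangle\leftrightarrow\langle tst^{-1}\rangle$, then $\langle u\rangle\leftrightarrow\langle c^3ec^3e^{-1}\rangle$, then $\langle v\rangle\leftrightarrow\langle c^3fc^3f^{-1}\rangle$. A short computation with the relations shows that each of $a$, $u$, $v$ is conjugate in $G$ into $H$ --- for instance $a=(vt)s(vt)^{-1}$ from $v^{-1}av=tst^{-1}$ together with $t^{-1}st=s^3$, and $u=x(c^3ec^3e^{-1})x^{-1}$ --- so that, after conjugating, $G$ is an iterated HNN-extension with single vertex group $H$ and infinite cyclic edge groups lying inside $H$. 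One then verifies that this family of cyclic subgroups of $H$ is suitably malnormal, equivalently that the defining relators satisfy a small-cancellation condition over the free product $H_2*\langle e,f\rangle$, and invokes a combination theorem for relative hyperbolicity (Osin's small cancellation over relatively hyperbolic groups, or Dahmani's combination theorem) to conclude that $G$ is hyperbolic relative to $\{H\}$. Checking the malnormality/small-cancellation hypotheses for the specific words $bacb^{-1}$, $tst^{-1}$, $c^3ec^3e^{-1}$, $c^3fc^3f^{-1}$ --- in particular controlling overlaps arising from the torsion element $c$ --- is the first substantial step.

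For (ii), write $H=H_2*\langle e,f\svert\emptyset\rangle=H_2*F_2$; it then suffices to prove that $H_2$ is finitely generated and residually finite, since then $H$ is finitely generated and residually finite (Gruenberg's theorem on free products of residually finite groups), hence Hopfian (Mal'cev). Now $b\mapsto bc^{-3}$, $c\mapsto c$ defines an automorphism of the finite group $H_0$, so $H_1\cong H_0\rtimes\mathbb{Z}$ is finite-by-cyclic --- in particular virtually cyclic and residually finite --- and one checks that $s^3$ is central in $H_1$ and that $\langle s^3\rangle$ is normal of finite index. Finally $H_2=\langle H_1,t\svert t^{-1}st=s^3\rangle$ is an HNN-extension of $H_1$ along the nested infinite cyclic subgroups $\langle s^3\rangle\leq\langle s\rangle$, and one shows this HNN-extension is residually finite by exploiting that $H_1$ has a chain of finite-index normal subgroups $\langle s^{3^k}\rangle$ with trivial intersection that is compatible, in the sense of Baumslag--Tretkoff, with the edge identification $s\mapsto s^3$ (morally, the residual finiteness of $\BS(1,3)$ with an added finite decoration). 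The care needed is that HNN-extensions do not preserve residual finiteness in general.

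For (iii), the plan is to write down an explicit endomorphism $\psi\colon G\to G$, fixing most generators and adjusting a few of $e,f,u,v,x,y$ so as to exploit the multiplication-by-$3$ hidden in $t^{-1}st=s^3$ and $v^{-1}av=tst^{-1}$ (note $v^{-1}a^3v=s$), then to show that $\psi$ is well defined (it respects every defining relation), surjective (each generator is recovered from its image using the HNN-relations), but not injective. The crux --- and the main obstacle of the theorem --- is exhibiting a nontrivial element of $\ker\psi$: one takes a commutator-type word of Baumslag--Solitar shape, built from the relevant generators, whose $\psi$-image collapses because two elements that are distinct in $G$ become equal after applying $\psi$, while the word itself is nontrivial in $G$. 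Proving this last nontriviality requires a normal-form (Britton's lemma) analysis through all the nested HNN-extensions and free products together with the structure of $H$, and here the relative hyperbolicity of $G$ with peripheral subgroup $H$ established in (i) is precisely what makes the bookkeeping tractable; organizing the construction of $\psi$ and its extension over every stage of the tower is where the image extension theorem advertised in the introduction should enter.
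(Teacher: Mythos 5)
A preliminary remark: the paper you are working from does not actually prove Theorem~\ref{thm:the_first_example}; it is quoted from \cite{Kim_Lee}, and the paper's own contribution is to distill its mechanism into the Image Extension Theorem (Theorem~\ref{thm:image_extension_theorem}) and Corollary~\ref{cor:image_extension_theorem}. Measured against that mechanism and against \cite{Kim_Lee}, your proposal has one fatal error and two places where the real work is only gestured at.

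The fatal error is in part (ii): $H_2$ is \emph{not} residually finite, so the route ``finitely generated $+$ residually finite $\Rightarrow$ Hopfian'' is unavailable. Indeed, in any finite quotient of $H_2$ the images of $s$ and $s^3$ are conjugate via the image of $t$, hence have the same order $m$, which forces $\gcd(m,3)=1$. On the other hand $s^3$ already centralizes $b$ and $c$ in $H_1$ (since $s^{-3}bs^3=bc^{-9}=b$ and $s^{-1}cs=c$), so conjugation by the image of $s$ on the image of $H_0$ has order dividing both $3$ and $m$, hence is trivial; the relation $s^{-1}bs=bc^{-3}$ then forces $c^3=1$ in the quotient. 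Since $c^3\neq 1$ in $H_2$ (the HNN tower embeds $H_0$, dihedral of order $18$), no finite quotient separates $c^3$ from $1$. This is not incidental but the engine of the whole construction: $c^3$ is precisely the element that survives in $H$ yet is killed by the eventual non-injective self-surjection of $G$ (which sends $c\mapsto c^3$, so $c^3\mapsto c^9=1$). Consequently the Hopficity of $H$ must be, and in \cite{Kim_Lee} is, established by a direct analysis of surjective endomorphisms of $H_2\ast F_2$, not via Mal'cev; your ``residual finiteness of $\BS(1,3)$ with an added finite decoration'' is exactly the step that fails, because the finite decoration is chosen to destroy residual finiteness.

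For part (i) your outline is in the right family, but the preliminary reduction is circular: at the stage where $K$ is formed, $a$ is a free factor of $H\ast\langle a\rangle$ and is \emph{not} conjugate into $H$; it only becomes so after the relation $v^{-1}av=tst^{-1}$ is imposed. The working argument (both in \cite{Kim_Lee} and in this paper's Section~\ref{sec:proof_of_the_image_extension_theorem}) keeps $\langle a\rangle$ as a peripheral subgroup, proves via annular-diagram arguments that the relevant hyperbolic elements $w$ (such as $bacb^{-1}$ and $c^3ec^3e^{-1}$) satisfy $E(w)=\langle w\rangle$ --- this is where $u^2\neq 1$-type hypotheses enter, cf.\ Lemma~\ref{lem:unique_elementary_subgroup} --- so that $\langle w\rangle$ is hyperbolically embedded by Theorem~\ref{thm:hyperbolically_embedded}, and then applies Theorem~\ref{thm:Osin_combination_theorem} once per stable letter to remove the auxiliary peripherals. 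For part (iii) you never produce the endomorphism or the kernel element; in fact the kernel element is simply $c^3\in H\le G$, whose nontriviality is immediate once $H\hookrightarrow G$ is known, and the genuinely delicate point is \emph{surjectivity}, which is exactly what the relators $x^{-1}ux=c^3ec^3e^{-1}$ and $y^{-1}vy=c^3fc^3f^{-1}$ (the ``$t^{-1}vt=[u,x]$'' pattern of Theorem~\ref{thm:image_extension_theorem}(iii)) are engineered to deliver. As written, the proposal does not yield a proof.
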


This construction may seem coincidental,
but it actually encompasses a hidden logic,
which we present in Theorem~\ref{thm:image_extension_theorem}.
For the definition of a relative presentation,
we refer the reader to \cite{Bogley_Pride}.
We use the notation $[a, b]$ to denote the commutator $aba^{-1}b^{-1}$
for any two elements $a$ and $b$ in a group.

\begin{theorem}[Image Extension Theorem]
\label{thm:image_extension_theorem}
Let $\psi$ be an endomorphism of a group $\mathbf{H}$.
Suppose that $u$ and $v$ are nontrivial elements of $\mathbf{H}$
such that $u \in \ker \psi \cap \im \psi$ and $v \in \ker \psi \setminus \im \psi$.
Suppose further that $u^2 \neq 1$ in $\mathbf{H}$ and that $v$ has infinite order.
Now define a group $\mathbf{G}$ by the relative presentation
\begin{equation}
\label{equ:image_extension_theorem_G}
\mathbf{G}=\langle \mathbf{H}, x, t \svert t^{-1}vt=[u,x] \rangle.
\end{equation}
Also define an endomorphism $\tilde{\psi}$ of $\mathbf{G}$ such that
\[
\tilde{\psi}|_\mathbf{H}=\psi, \quad \tilde{\psi}(x)=x \quad \textrm{and} \quad \tilde{\psi}(t)=t.
\]
Then the following hold.
\begin{enumerate}[\indent \rm (i)]
\item $\mathbf{G}$ contains $\mathbf{H}$ as a subgroup.

\item $\mathbf{G}$ is hyperbolic relative to $\mathbf{H}$.

\item $\tilde{\psi}$ is an extension of $\psi$ such that $v \in \im \tilde{\psi}$.
\end{enumerate}	
\end{theorem}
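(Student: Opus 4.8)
The plan is to recognize $\mathbf{G}$ as a concrete HNN extension, to read off (i) and (iii) by direct manipulation, and to concentrate the real work on (ii). Since $u\neq 1$, the commutator $[u,x]$ is cyclically reduced of syllable length four in the free product $\mathbf{H}*\langle x\rangle$, hence has infinite order, while $v$ has infinite order by hypothesis; thus $v\mapsto[u,x]$ extends to an isomorphism $\langle v\rangle\to\langle[u,x]\rangle$ between infinite cyclic subgroups of $\mathbf{H}*\langle x\rangle$, and the relative presentation \eqref{equ:image_extension_theorem_G} exhibits $\mathbf{G}$ as the HNN extension $\langle\,\mathbf{H}*\langle x\rangle,\ t\svert t^{-1}vt=[u,x]\,\rangle$ with stable letter $t$ and associated subgroups $\langle v\rangle$, $\langle[u,x]\rangle$. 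Part (i) is then immediate: $\mathbf{H}*\langle x\rangle$ embeds in $\mathbf{G}$ by Britton's lemma, and $\mathbf{H}$ embeds in $\mathbf{H}*\langle x\rangle$.

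For (iii) I would first check that $\tilde\psi$ is a well-defined endomorphism: the relators of $\mathbf{H}$ are respected because $\psi$ is a homomorphism, and the defining relator of $\mathbf{G}$ is sent to $t^{-1}\psi(v)t\cdot[\psi(u),x]^{-1}$, which is trivial because $u,v\in\ker\psi$. Since $\tilde\psi|_{\mathbf{H}}=\psi$ and $\mathbf{H}\le\mathbf{G}$ by (i), $\tilde\psi$ extends $\psi$. Finally, to see $v\in\im\tilde\psi$, choose $h\in\mathbf{H}$ with $\psi(h)=u$, which is possible since $u\in\im\psi$; then $\tilde\psi(t[h,x]t^{-1})=t[\psi(h),x]t^{-1}=t[u,x]t^{-1}=v$, the last equality being the defining relation of $\mathbf{G}$.

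Part (ii) is the substantive statement. The starting point is that $\mathbf{H}*\langle x\rangle$ is hyperbolic relative to $\mathbf{H}$: free products are hyperbolic relative to their factors, and the free factor $\langle x\rangle$, being hyperbolic, may be deleted from the peripheral collection. It then remains to show that forming the HNN extension above over the cyclic edge subgroups $\langle v\rangle$ and $\langle[u,x]\rangle$ preserves relative hyperbolicity with the same peripheral subgroup $\mathbf{H}$, and I would deduce this from a combination theorem for relatively hyperbolic groups (of the type due to Dahmani, and to Osin), applied to this one-vertex one-edge graph of groups. The hypotheses to be verified concern the edge subgroups: relative quasiconvexity of each of $\langle v\rangle$ and $\langle[u,x]\rangle$ in $\mathbf{H}*\langle x\rangle$ --- automatic for $\langle v\rangle$, which lies in the peripheral subgroup, and holding for $\langle[u,x]\rangle$ because $[u,x]$ is a loxodromic element --- together with an almost-malnormality / general-position condition on these subgroups and their conjugates.

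The hard part will be this last condition for $\langle[u,x]\rangle$, and it is exactly here that the hypotheses $u^2\neq 1$ and ``$v$ of infinite order'' are used. Over a free product one checks that $[u,x]$ is conjugate to $[u,x]^{-1}$ if and only if $u^2=1$ (in which case $u$ itself conjugates one to the other); hence $u^2\neq 1$ forces $\langle[u,x]\rangle$ to be self-normalizing in the required sense, while infinite order of $v$ is what makes $\langle v\rangle$ a faithful copy of $\ZZ$. The possible non-malnormality of $\langle v\rangle$ inside $\mathbf{H}$ causes no trouble, since it is absorbed into the peripheral subgroup $\mathbf{H}$. A careful syllable-length bookkeeping in $\mathbf{H}*\langle x\rangle$, attending to the torsion in $\mathbf{H}$, should settle these verifications, after which the combination theorem yields that $\mathbf{G}$ is hyperbolic relative to $\mathbf{H}$. (Alternatively, one could establish a linear relative isoperimetric inequality for \eqref{equ:image_extension_theorem_G} relative to $\mathbf{H}$ directly, using the HNN normal form, but the combination-theorem route seems cleaner.)
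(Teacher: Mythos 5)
Your proposal is correct and follows the same skeleton as the paper's proof: realize $\mathbf{G}$ as the HNN extension of $\mathbf{H}\ast\langle x\rangle$ over $\langle v\rangle\cong\langle[u,x]\rangle$ (giving (i) via Britton's lemma, exactly as in Lemma~\ref{lem:H_subgroup_of_G}), note that $\mathbf{H}\ast\langle x\rangle$ is hyperbolic relative to $\mathbf{H}$ after deleting the hyperbolic free factor $\langle x\rangle$, and obtain (ii) from a combination theorem once $\langle[u,x]\rangle$ is shown to sit in suitably general position; your argument for (iii) is the paper's, with the welcome extra check that $\tilde\psi$ is well defined (which uses $u,v\in\ker\psi$ and which the paper leaves implicit). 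The one place where you genuinely diverge is the middle step. The paper upgrades $\langle[u,x]\rangle$ to a peripheral subgroup: Lemma~\ref{lem:hyperbolic element} shows $[u,x]$ is a hyperbolic element via the conjugacy theorem for free products, Lemma~\ref{lem:unique_elementary_subgroup} shows $E([u,x])=\langle[u,x]\rangle$ by an annular-diagram analysis (Case~1, the orientation-reversing case, is exactly where $u^2\neq1$ kills a subdiagram with boundary label $u^2$), and then Theorem~\ref{thm:hyperbolically_embedded} and Osin's Corollary~1.4 (Theorem~\ref{thm:Osin_combination_theorem}) finish, with the crucial point---which you also noted---that only the loxodromic edge group needs to be peripheral, while $\langle v\rangle$ is absorbed into $\mathbf{H}$. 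You instead propose relative quasiconvexity plus almost-malnormality and a Dahmani/Osin-style graph-of-groups theorem; this works and rests on the same two computations, namely that $[u,x]\sim[u,x]^{-1}$ forces $u^2=1$ (your observation is correct, and it is the free-product counterpart of the paper's Case~1) and that the centralizer of $[u,x]$ in $\mathbf{H}\ast\langle x\rangle$ is $\langle[u,x]\rangle$ because $[u,x]$ is cyclically reduced of syllable length four and not a proper power (the counterpart of the paper's Case~2). You leave this last verification as ``syllable-length bookkeeping,'' so to be complete you would need to record the full statement that any $f$ with $f^{-1}[u,x]^nf=[u,x]^{\pm n}$ lies in $\langle[u,x]\rangle$; but that is standard free-product combinatorics and arguably more elementary than the paper's diagrammatic route, at the cost of having to pin down the exact hypotheses of whichever combination theorem you cite.
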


Here, it is evident that $\tilde{\psi}$ is not injective,
as the non-identity element $u$ belongs to the kernel of $\tilde{\psi}$.
Therefore, the following conclusion follows immediately.

\begin{corollary}
\label{cor:image_extension_theorem}
In Theorem~\ref{thm:image_extension_theorem},
suppose further that $\mathbf{H}$ is Hopfian and $\tilde{\psi}$ is surjective.
Then $\mathbf{G}$ is a non-Hopfian relatively hyperbolic group with respect to a Hopfian subgroup $\mathbf{H}$.
\end{corollary}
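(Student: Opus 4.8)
The plan is to observe that Corollary~\ref{cor:image_extension_theorem} is essentially a repackaging of Theorem~\ref{thm:image_extension_theorem} together with the two additional hypotheses, so the proof will be short. Relative hyperbolicity of $\mathbf{G}$ with respect to $\mathbf{H}$ is precisely the content of Theorem~\ref{thm:image_extension_theorem}(ii), and $\mathbf{H}$ is Hopfian by assumption. Hence the only point that genuinely needs an argument is that $\mathbf{G}$ fails the Hopf property.

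To establish this, I would exhibit a surjective endomorphism of $\mathbf{G}$ that is not injective, and the natural candidate is $\tilde{\psi}$ itself. Surjectivity is part of the hypothesis, so it remains to check non-injectivity. First, $u \neq 1$ in $\mathbf{H}$: the standing assumption $u^2 \neq 1$ already rules out $u = 1$. By Theorem~\ref{thm:image_extension_theorem}(i), $\mathbf{H}$ embeds in $\mathbf{G}$ as a subgroup, so $u$ remains a nontrivial element of $\mathbf{G}$. On the other hand $u \in \ker \psi$ by hypothesis, and $\tilde{\psi}$ extends $\psi$ by Theorem~\ref{thm:image_extension_theorem}(iii), whence $\tilde{\psi}(u) = \psi(u) = 1$. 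Thus $u$ is a nontrivial element of $\ker \tilde{\psi}$, so $\tilde{\psi}$ is not injective, and therefore not an automorphism of $\mathbf{G}$.

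Combining these observations, $\tilde{\psi}$ is a surjective but non-injective endomorphism of $\mathbf{G}$, so $\mathbf{G}$ is non-Hopfian; together with the relative hyperbolicity of $\mathbf{G}$ over $\mathbf{H}$ and the Hopficity of $\mathbf{H}$, this is exactly the claimed conclusion. The only step carrying any subtlety at all is ensuring that $u$ does not collapse to the identity in the larger group $\mathbf{G}$, and this is taken care of by the embedding statement Theorem~\ref{thm:image_extension_theorem}(i); there is no real obstacle here, since all of the substantive work — constructing $\mathbf{G}$ via the relative presentation~\eqref{equ:image_extension_theorem_G}, verifying relative hyperbolicity, and producing the extension $\tilde{\psi}$ of $\psi$ with $v \in \im \tilde{\psi}$ — is already packaged in Theorem~\ref{thm:image_extension_theorem}.
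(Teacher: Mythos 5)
Your proof is correct and follows essentially the same route as the paper: the paper's remark preceding the corollary observes that $\tilde{\psi}$ is non-injective because the non-identity element $u$ (which survives in $\mathbf{G}$ by part (i)) lies in $\ker\tilde{\psi}$, and then the conclusion is immediate from surjectivity, part (ii), and the Hopficity hypothesis on $\mathbf{H}$. Your write-up just makes these same steps slightly more explicit.
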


We adopt this corollary as a general recipe to produce
a non-Hopfian relatively hyperbolic group with respect to a Hopfian subgroup.
Additionally, we provide two specific examples
(see, for the proofs, Propositions~\ref{prop:first_example} and \ref{prop:second_example})
to illustrate the application of Corollary~\ref{cor:image_extension_theorem}.
In both examples, we employ groups, as peripheral subgroups, that were previously established as Hopfian~\cite{Kim_Lee, Andreadakis}.

\begin{example}
\label{example:first_example}
Let $\mathbf{H}$ be a group given by the presentation
\[
\begin{aligned}
\mathbf{H}=\langle b, c, s, k \svert
&b^2=c^9=1, \ b^{-1}cb=c^{-1},\\
&s^{-1}bs=bc^{-3}, \ s^{-1}cs=c,\\
&k^{-1}sk=s^3
\rangle,
\end{aligned}
\]
and suppose that $\mathbf{G}$ is a group given by the relative presentation
\[
\mathbf{G}=\langle \mathbf{H}, x, t \svert  t^{-1}\{(ks^{-1}k^{-1})b(ksk^{-1})cb^{-1}\}t= [c^3,x]  \rangle.
\]
Then $\mathbf{G}$ is a non-Hopfian relatively hyperbolic group with respect to a Hopfian subgroup $\mathbf{H}$.		
\end{example}

\begin{example}
\label{example:second_example}
Let $\mathbf{H}$ be a group given by the presentation
\[
\mathbf{H}=\langle a,b,s \svert [a,b]=1,\ s^{-1}a^2s=a^4 \rangle,
\]
and suppose that $\mathbf{G}$ is a group given by the relative presentation
\[
\mathbf{G}=\langle \mathbf{H}, x, t \svert  t^{-1}(s^{-1}asa^{-2})t= [[sa^2s^{-1},b],x] \rangle.
\]	
Then $\mathbf{G}$ is a non-Hopfian relatively hyperbolic group with respect to a subgroup $\mathbf{H}$.
\end{example}

The present paper is organized as follows.
In Section~\ref{sec:necessary_definitions_and_theorems},
we review necessary definitions and theorems
that will be used in the subsequent sections.
Section~\ref{sec:proof_of_the_image_extension_theorem}
is dedicated entirely to proving Theorem~\ref{thm:image_extension_theorem}.
In Lemma~\ref{lem:H_subgroup_of_G},
we establish the proof for part (i) of Theorem~\ref{thm:image_extension_theorem}.
To demonstrate part (ii) of Theorem~\ref{thm:image_extension_theorem},
we first establish that
$\mathbf{H} \ast \langle x \rangle$ is hyperbolic relative to the collection of subgroups
$\{\mathbf{H}, \langle [u,x] \rangle\}$
by using Lemmas~\ref{lem:hyperbolic element} and \ref{lem:unique_elementary_subgroup}
together with Osin's theorem about hyperbolically embedded subgroups.
We then employ Osin's combination theorem for relatively hyperbolic groups
to deduce that $\mathbf{G}$ is hyperbolic relative to $\mathbf{H}$. 
Part (iii) of Theorem~\ref{thm:image_extension_theorem} easily follows from
the hypothesis that $u \in \im \psi$.
Finally, in Section~\ref{sec:proof_proposition}, we rename Examples~\ref{example:first_example} and \ref{example:second_example} as Propositions~\ref{prop:first_example} and \ref{prop:second_example}, respectively, and present their respective proofs.

\section{Necessary definitions and theorems}
\label{sec:necessary_definitions_and_theorems}

In this section,
we review necessary definitions and theorems
that will be used to prove Theorem~\ref{thm:image_extension_theorem}.

\subsection{Relatively hyperbolic groups}

The following definition and theorem about hyperbolically embedded subgroups
of relatively hyperbolic groups are attributed to Osin~\cite{Osin3}.

\begin{definition}[Hyperbolically embedded subgroups]
{\rm Let $G$ be hyperbolic relative to a collection of subgroups $\mathbb{H}$.
A subgroup $Q \le G$ is said to be {\em hyperbolically embedded into} $G$,
if $G$ is hyperbolic relative to $\mathbb{H} \cup \{Q\}$.
}
\end{definition}

Recall that an element in a relatively hyperbolic group $G$ is referred to as {\it hyperbolic}
if it has infinite order and not conjugate to any element of a peripheral subgroup of $G$.

\begin{theorem}[{\cite[Theorem~4.3, Corollary~1.7]{Osin3}}]
\label{thm:hyperbolically_embedded}
Let $G$ be hyperbolic relative to a collection of subgroups $\mathbb{H}$.
Then for any hyperbolic element $g \in G$, the unique maximal elementary subgroup $E(g)$
containing $g$ is hyperbolically embedded into $G$, where $E(g)$ is the set
\[
E(g)=\{f \in G : f^{-1}g^nf = g^{\pm n} \ \textrm{for some} \ n \in \mathbb{N}\}.
\]	
\end{theorem}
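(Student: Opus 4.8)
The plan is to follow Osin's argument from \cite{Osin3}; since the statement is quoted rather than reproved here, I describe how the proof goes. First I would fix a geometric model for the relative hyperbolicity of $(G,\mathbb{H})$: $G$ acts on a Gromov-hyperbolic space (the coned-off Cayley graph, or equivalently a combinatorial horoball / cusped space) so that each peripheral subgroup stabilizes a horoball, while a hyperbolic element $g$ acts as a loxodromic isometry with exactly two boundary fixed points $g^{+\infty},g^{-\infty}$ and north--south dynamics near them. The first task is to identify the displayed set $E(g)$ with the setwise stabilizer of the pair $\{g^{+\infty},g^{-\infty}\}$, and to show that this stabilizer is virtually infinite cyclic and is the unique maximal elementary subgroup containing $g$. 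One inclusion is formal: if $f^{-1}g^{n}f=g^{\pm n}$ then $f$ permutes $\{g^{+\infty},g^{-\infty}\}$. For the converse and for virtual cyclicity one uses that no nontrivial element conjugate into a peripheral subgroup can fix an endpoint of a loxodromic axis --- this is precisely where the hypothesis "$g$ hyperbolic", i.e. not conjugate into any $H_\lambda$, is used --- together with discreteness of the action, so that $\mathrm{Stab}(\{g^{\pm\infty}\})$ contains some $\langle g^{k}\rangle$ with finite index. This step already yields the content of Corollary~1.7 of \cite{Osin3}, namely the existence, uniqueness, and virtual cyclicity of $E(g)$.

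The second part is to verify the two conditions that, by Osin's characterization of hyperbolically embedded subgroups (stated in terms of the relative Cayley graph of $G$), are equivalent to $E(g)$ being hyperbolically embedded: \textbf{(a)} \emph{undistortion} --- with respect to a finite generating set of $E(g)$, the inclusion of $E(g)$ into $G$ equipped with the metric relative to $\mathbb{H}$ is a quasi-isometric embedding; and \textbf{(b)} \emph{finite width} --- for every $\varepsilon>0$ there is $R>0$ such that whenever $g_{0}\in G\setminus E(g)$, the $\varepsilon$-neighborhoods of the cosets $E(g)$ and $g_{0}E(g)$ meet in a subset of diameter at most $R$ in the relative metric. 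Condition (a) is straightforward: a suitable power of $g$ translates along a quasigeodesic in the hyperbolic coned-off graph, so $\langle g\rangle$, and hence the commensurable subgroup $E(g)$, is quasi-isometrically embedded.

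Condition (b) is the technical heart, and the step I expect to be the main obstacle. Its soft version --- that $E(g)\cap g_{0}E(g)g_{0}^{-1}$ infinite forces $g_{0}\in E(g)$ --- follows quickly from Step~1: an infinite subgroup of the virtually cyclic group $E(g)$ contains a power $g^{m}$, so $g_{0}^{-1}g^{m}g_{0}\in E(g)$ shares its boundary fixed-point pair with $g$, forcing $g_{0}$ to stabilize $\{g^{\pm\infty}\}$. Upgrading this to the quantitative finite-width statement --- a long overlap of the cosets $E(g)$ and $g_{0}E(g)$, which are quasigeodesic lines with endpoints $\{g^{\pm\infty}\}$ and $\{g_{0}g^{\pm\infty}\}$ at infinity, implies $g_{0}\in E(g)$ with $R$ independent of $g_{0}$ --- is where real hyperbolic geometry is needed: stability of quasigeodesics and thinness of quadrilaterals in the coned-off graph, combined with control of how these lines interact with horoballs (the bounded coset penetration property, together with the fact that the axis of $g$ has uniformly bounded penetration into each horoball because $g$ is not conjugate into a peripheral subgroup), and with making all the fellow-travelling and stability constants uniform in the exponent and in $g_{0}$. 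Once (a) and (b) are established, Osin's criterion shows that $\mathbb{H}\cup\{E(g)\}$ is again a peripheral structure for $G$, i.e. $E(g)$ is hyperbolically embedded into $G$, which is the assertion.
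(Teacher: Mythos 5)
The paper does not prove this statement: it is quoted from Osin \cite{Osin3} (Theorem~4.3 together with Corollary~1.7) and used as a black box, so there is no internal proof to compare your proposal against. That said, your outline is a reasonable reconstruction of the standard argument and close in spirit to Osin's: one shows that a hyperbolic element $g$ acts loxodromically in the relative setting, identifies $E(g)$ with the virtually cyclic stabilizer of its fixed-point pair (this is where the hypothesis that $g$ is not conjugate into any peripheral subgroup enters), and then verifies Osin's characterization of hyperbolically embedded subgroups, which combines undistortion of $E(g)$ in the relative metric with a malnormality-type condition. Two points of divergence are worth noting. First, Osin's criterion only requires the \emph{finiteness} of $E(g)\cap g_0E(g)g_0^{-1}$ for $g_0\notin E(g)$ --- essentially your ``soft version'' --- rather than the quantitative coset-overlap statement you single out as the technical heart; the quantitative hyperbolic geometry is absorbed into the proof that this criterion implies hyperbolic embeddedness, not into its hypotheses, so the burden you place on yourself in step (b) is heavier than necessary. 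Second, Osin argues combinatorially with the relative Cayley graph and relative isoperimetric inequalities rather than with a cusped space and horoball penetration, though the two viewpoints are interchangeable. Since the theorem is cited rather than proved in this paper, none of this bears on the paper's correctness.
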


We recall one of Osin's combination theorems for relatively hyperbolic groups.

\begin{theorem}[{\cite[Corollary~1.4]{Osin2}}]\label{thm:Osin_combination_theorem}
Suppose that a group $G$ is hyperbolic relative to a collection of subgroups
$\mathbb{H}=\{ H_\lambda \}_{\lambda \in \Lambda}$.
Assume in addition that there exists a monomorphism $\iota : H_\mu \rightarrow H_\nu$ for some $\mu \neq \nu \in \Lambda$,
and that $H_\mu$ is finitely generated.
Then the HNN-extension
\[
G^*=\langle G, t \svert t^{-1}ht=\iota(h), h \in H_\mu \rangle
\]
is hyperbolic relative to the collection $\mathbb{H} \setminus \{H_\lambda\}$.		
\end{theorem}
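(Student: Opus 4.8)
The plan is to establish the relative hyperbolicity of $G^*$ by means of Osin's relative isoperimetric characterization: a group is hyperbolic relative to a collection of subgroups exactly when it is finitely presented relative to that collection and its relative Dehn function is linear \cite{Osin}. Setting $\mathbb{H}'=\mathbb{H}\setminus\{H_\mu\}$, it thus suffices to check two properties of $G^*$ relative to $\mathbb{H}'$: that $G^*$ admits a finite relative presentation over $\mathbb{H}'$, and that its relative Dehn function grows at most linearly.

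For the finite relative presentation I would begin with a finite relative presentation $\langle X,\,\mathbb{H}\svert\mathcal{R}\rangle$ of $G$ over $\mathbb{H}$. This is exactly where the finite generation of $H_\mu$ is used: fixing generators $h_1,\dots,h_m$ of $H_\mu$, the defining relations $t^{-1}ht=\iota(h)$ for all $h\in H_\mu$ are consequences of the finitely many relations $t^{-1}h_it=\iota(h_i)$, and each image $\iota(h_i)$ lies in $H_\nu\in\mathbb{H}'$. Moreover every $h_i$ equals the word $t\,\iota(h_i)\,t^{-1}$ over the alphabet $X\cup\{t\}\cup\big(\bigsqcup_{\lambda\neq\mu}H_\lambda\big)$, so the letters of $H_\mu$ can be eliminated from $\mathcal{R}$ by substitution. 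Adjoining $t$ together with the finitely many HNN relators and then discarding $H_\mu$ from the peripheral alphabet yields a finite relative presentation of $G^*$ over $\mathbb{H}'$.

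The substantive step, and the main obstacle, is the linear relative isoperimetric inequality. Given a word $w$ of relative length $n$ over $\mathbb{H}'$ that is trivial in $G^*$, I would pass to a reduced van Kampen diagram $\Delta$ over the presentation just built. Because $t$ appears only in the HNN relators, the $t$-edges of $\Delta$ organize into $t$-corridors; reducedness together with Britton's lemma rules out $t$-annuli and forces each corridor to terminate on $\partial\Delta$, so the number of corridors is at most half the number of boundary occurrences of $t$, hence $O(n)$. Reading the two sides of a corridor spells a word in $H_\mu$ on one side and its $\iota$-image in $H_\nu$ on the other. Cutting $\Delta$ along all corridors decomposes it into finitely many subdiagrams over the relative presentation of $G$, to each of which the linear relative Dehn function of $G$ over $\mathbb{H}$ applies.

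The delicate bookkeeping, which is the heart of the estimate, is that a whole $H_\mu$-subword running along a corridor side counts as a single syllable in the $\mathbb{H}$-relative metric of $G$ but must be accounted for in the coarser $\mathbb{H}'$-relative metric of $w$. This is precisely what the corridor count controls: each such $H_\mu$-syllable is charged to one corridor, there are only $O(n)$ corridors, and the remaining boundary of the subdiagrams is bounded by $|w|$ measured over $\mathbb{H}'$. Summing the $G$-linear estimates over the $O(n)$ subdiagrams and adding the $O(n)$ cells contributed by the corridors themselves bounds the relative area of $w$ linearly in $n$. Together with the finite relative presentation, this gives the linear relative Dehn function and hence the relative hyperbolicity of $G^*$ over $\mathbb{H}'$; the full quantitative form of this corridor analysis is carried out in \cite{Osin2}.
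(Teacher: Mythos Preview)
The paper does not prove this theorem at all: Theorem~\ref{thm:Osin_combination_theorem} is quoted verbatim from \cite[Corollary~1.4]{Osin2} and used as a black box in the proof of Theorem~\ref{thm:image_extension_theorem}(ii), so there is no ``paper's own proof'' to compare your proposal against.

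That said, your sketch is a faithful outline of Osin's original argument in \cite{Osin2}: relative hyperbolicity via a linear relative Dehn function, a finite relative presentation obtained by eliminating $H_\mu$ using the stable letter and the finite generating set, and the area estimate via $t$-corridors in a reduced diagram. You correctly identify the key bookkeeping issue (an $H_\mu$-syllable along a corridor side is one letter over $\mathbb{H}$ but must be charged against the corridor count over $\mathbb{H}'$), and you rightly defer the full quantitative estimate to \cite{Osin2}. As a proof proposal this is fine, with the understanding that the corridor argument you sketch is exactly what \cite{Osin2} carries out in detail; nothing further is needed for the present paper, which simply invokes the result.
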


\subsection{Van Kampen diagrams and annular diagrams}

The definitions and notation for van Kampen diagrams and annular diagrams
follow Lyndon and Schupp~\cite[Chapter~V.1]{Lyndon_Schupp}.
The notation $(u)$ represents the cyclic word associated with
a cyclically reduced word $u$, and
$(u) \equiv (v)$ indicates the visual equality of two cyclic words $(u)$ and $(v)$,
meaning that $v$ is a cyclic shift of $u$.

The following two well-known theorems serve as crucial tools in proving Lemma~\ref{lem:unique_elementary_subgroup}.

\begin{theorem}[van Kampen's Lemma]
\label{thm:van_Kampen}
Suppose $G=\langle X \svert R \rangle$, where every $r \in R$ is cyclically reduced.
Let $u$ be a cyclically reduced word in $X$
which represents the identity element in $G$
if and only if
there exists a reduced van Kampen diagram $\Delta$ over $G=\langle X \svert R \rangle$
such that
$(\Lab(\partial \Delta)) \equiv (u)$, where $\Lab$ is a labeling function.				
\end{theorem}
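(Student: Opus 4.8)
The plan is to prove the two implications separately, treating the characterization as an equivalence between an algebraic condition (membership in the normal closure of $R$) and a combinatorial one (existence of a reduced diagram). Throughout I regard $G=\langle X \svert R \rangle$ as $F(X)/N$, where $F(X)$ is the free group on $X$ and $N=\llangle R \rrangle$ is the normal closure of $R$; thus a word $u$ represents the identity in $G$ precisely when $u\in N$, i.e.\ when $u$ is freely equal in $F(X)$ to a product
\[
u=\prod_{i=1}^{n} w_i\, r_i^{\epsilon_i}\, w_i^{-1}
\]
with each $w_i\in F(X)$, each $r_i\in R$, and each $\epsilon_i\in\{+1,-1\}$.

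For the direction asserting that a reduced van Kampen diagram $\Delta$ with $(\Lab(\partial\Delta))\equiv(u)$ forces $u=1$ in $G$, I would argue by induction on the number of $2$-cells (regions) of $\Delta$. If $\Delta$ has no regions, its underlying graph is a tree, so its boundary label freely reduces to the empty word and $u=1$ already in $F(X)$. For the inductive step I select a region $D$ whose boundary meets $\partial\Delta$ along an edge, read the boundary of $\Delta$ as a product of a conjugate $w\,(\Lab(\partial D))\,w^{-1}$ of the label of $D$ and the boundary label of the diagram $\Delta'$ obtained by deleting $D$ (and any resulting spur), and observe that $\Lab(\partial D)$ is a cyclic conjugate of some $r^{\pm 1}$ and hence trivial in $G$; the inductive hypothesis applied to $\Delta'$ then yields $u=1$ in $G$. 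The only care needed here is the standard bookkeeping that lets one peel a boundary region off a connected simply connected planar diagram while keeping track of the base point.

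For the converse, given an expression of $u$ as above, I would build a diagram directly from the product and then make it reduced. First I assemble a \emph{wedge of lollipops}: from a single base vertex $O$ I attach, for each index $i$, a path (the stem) labeled $w_i$ terminating in a $2$-cell (the candy) whose boundary reads $r_i^{\epsilon_i}$; reading the outer boundary of this planar complex spells exactly the word $\prod_i w_i r_i^{\epsilon_i} w_i^{-1}$, which is freely equal to $u$. This diagram need not be reduced and its boundary label need not be freely reduced, so I then apply two clean-up operations: \emph{folding} together two boundary edges issuing from a common vertex with identical labels (which realizes a free reduction of the boundary word), and \emph{deleting a cancelling pair} of regions that share an edge and carry mutually inverse boundary labels. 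Each operation strictly decreases a complexity measure (the number of edges, respectively the number of regions), so the process terminates in a reduced diagram $\Delta$. Because $u$ is cyclically reduced by hypothesis, once no further folds are possible the boundary word is cyclically reduced and freely equal to $u$, whence $(\Lab(\partial\Delta))\equiv(u)$, as required.

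The main obstacle I anticipate lies in the converse direction, in verifying that the folding-and-cancellation procedure preserves planarity and connectedness of the diagram at every stage and genuinely terminates with the boundary spelling the cyclic word $(u)$ rather than merely a word freely equal to $u$. Making the folding operation precise as a quotient of the planar complex, and confirming that a cyclically reduced target word leaves no residual free cancellation on the boundary, is the delicate point; the soundness direction above is comparatively routine.
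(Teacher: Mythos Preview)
The paper does not give its own proof of this statement: van~Kampen's Lemma is quoted in Section~\ref{sec:necessary_definitions_and_theorems} as a classical tool, with the reference to \cite[Chapter~V.1]{Lyndon_Schupp} standing in for the argument. So there is no in-paper proof to compare your proposal against.

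That said, your sketch is essentially the standard proof one finds in Lyndon--Schupp. The soundness direction (diagram $\Rightarrow$ $u=1$) by peeling off a boundary region is correct; you should note explicitly why a simply connected planar diagram with at least one $2$-cell must have a region meeting $\partial\Delta$ in an edge, but this is routine. For the converse, the wedge-of-lollipops construction followed by folding and cancellation of reducible pairs is exactly the textbook approach, and the obstacles you single out---that removing a cancelling pair can destroy simple connectedness and requires a cut-and-paste (``diamond move'') to restore it, and that one must check the final boundary label is literally the cyclic word $(u)$ rather than merely freely equal to $u$---are precisely the points where the real work lies. Your plan is sound; what remains is the careful bookkeeping, for which \cite[Chapter~V.1]{Lyndon_Schupp} is the appropriate source.
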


\begin{theorem}[Schupp's Lemma]
\label{thm:schupps_lemma}
Suppose $G=\langle X \svert R \rangle$, where every $r \in R$ is cyclically reduced.
Let $u, v$ be two cyclically reduced words in $X$
which are nontrivial in $G$.
Then $u$ and $v$ represent conjugate elements in $G$ if and only if
there exists a reduced annular diagram $\Delta$ over $G=\langle X \svert R \rangle$
such that
$(\Lab(\partial_\textup{out} \Delta)) \equiv (u)$ and $(\Lab(\partial_\textup{inn} \Delta)) \equiv (v^{-1})$.
Here, $\partial_\textup{out}\Delta$ is the boundary of the unbounded component of $\mathbb{R}^2 \setminus \Delta$ and $\partial_\textup{inn}\Delta$ is the boundary of the bounded component of $\mathbb{R}^2 \setminus \Delta$.	
\end{theorem}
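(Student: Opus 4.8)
The plan is to deduce the statement from van Kampen's Lemma (Theorem~\ref{thm:van_Kampen}) by the classical cut-and-fold correspondence between conjugacy and triviality, treating the two implications separately. For the ``if'' direction, suppose we are given a reduced annular diagram $\Delta$ with $(\Lab(\partial_\textup{out}\Delta))\equiv(u)$ and $(\Lab(\partial_\textup{inn}\Delta))\equiv(v^{-1})$. Since $\RR^{2}\setminus\Delta$ has both an unbounded and a bounded component and $\Delta$ is connected, the $1$-skeleton of $\Delta$ contains a simple edge-path $\gamma$ from a vertex $p\in\partial_\textup{out}\Delta$ to a vertex $q\in\partial_\textup{inn}\Delta$. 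Cutting $\Delta$ along $\gamma$ produces a van Kampen disk diagram $\Delta'$ whose boundary label, read from $p$, is $u_{0}\,w\,v_{0}^{-1}\,w^{-1}$, where $u_{0}$ and $v_{0}$ are cyclic shifts of $u$ and $v$, respectively, and $w=\Lab(\gamma)$. Because every $2$-cell of $\Delta'$ is labelled by an element of $R$, this boundary word represents $1$ in $G$; hence $u_{0}=w\,v_{0}\,w^{-1}$ in $G$, and so $u$ and $v$ represent conjugate elements.

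For the ``only if'' direction, assume $u=w\,v\,w^{-1}$ in $G$ for some word $w$, which we may take reduced. Then $W:=u\,w\,v^{-1}\,w^{-1}$ represents $1$ in $G$, so van Kampen's Lemma (applied to the cyclically reduced form of $W$, with spikes attached to recover the prescribed boundary word) yields a reduced disk diagram $\Delta_{0}$ with $(\Lab(\partial\Delta_{0}))\equiv(W)$. I would then subdivide $\partial\Delta_{0}$ into four consecutive arcs carrying the labels $u$, $w$, $v^{-1}$, $w^{-1}$, and fold the $w$-arc onto the $w^{-1}$-arc, identifying them edge by edge. Topologically this turns the disk into an annulus: the $u$-arc becomes a loop reading $(u)$, the $v^{-1}$-arc becomes a loop reading $(v^{-1})$, and the collapsed $w$-arc joins them; choosing the planar embedding so that the $u$-loop bounds the unbounded region produces the required outer and inner boundary labels. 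Any mutually cancelling pair of $2$-cells created by the fold can be deleted one pair at a time until the diagram is reduced, without changing the boundary labels.

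The delicate point is making the fold in the ``only if'' direction rigorous. One must check that the edge-by-edge identification of the two $w$-arcs is free, so that distinct $2$-cells are not inadvertently merged, and, above all, that the inner boundary component neither degenerates to a point nor is absorbed by the subsequent reductions. This is precisely where the hypotheses that $u$ and $v$ are cyclically reduced and nontrivial in $G$ enter: a collapse of the inner hole, or a reduction reaching the inner boundary, would force $v=1$ in $G$, and symmetrically for $u$ on the outer side. To control this I would choose, among all reduced words $w$ with $u\,w\,v^{-1}\,w^{-1}=1$ in $G$ and all reduced disk diagrams realizing this word, one with the minimum number of $2$-cells, and argue that for such a minimal choice the fold is automatically clean and the resulting annular diagram is already reduced.
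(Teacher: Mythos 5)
This statement is quoted by the paper as a classical result (the conjugacy-diagram lemma of Lyndon--Schupp, Chapter~V), and the paper supplies no proof of its own, so there is no in-paper argument to compare against; your cut-and-fold argument is the standard textbook proof of exactly this lemma. Your ``if'' direction is complete and your orientation bookkeeping is consistent with the paper's conventions (outer boundary clockwise, inner boundary counterclockwise): cutting along a simple arc $\gamma$ from $\partial_\textup{out}\Delta$ to $\partial_\textup{inn}\Delta$ yields a disk with boundary word $u_0\,w\,v_0^{-1}\,w^{-1}$, and triviality of that word follows from the easy direction of van Kampen's Lemma. In the ``only if'' direction you correctly identify both the right construction (fold the two $w$-arcs of a disk diagram for $u\,w\,v^{-1}\,w^{-1}$) and the right device for making it work (minimize over the conjugating word $w$ and over diagrams), but the crux is asserted rather than executed: you still owe the verification that for a minimal choice the identification of the two $w$-arcs does not force identifications among $2$-cells, that the subsequent removal of cancelling pairs preserves the annular topology and both boundary labels (this is where the hypotheses that $u$ and $v$ are nontrivial and cyclically reduced are actually consumed), and that the degenerate case of an empty conjugator $w$ is handled (e.g.\ by replacing $w$ with $u$ itself, which is a nonempty word since $u\neq 1$). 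These checks are precisely the content of Schupp's original argument, so your proposal is a faithful outline of the known proof rather than a new route, with the hardest step flagged but left open.
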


As is customary, we assign orientations to all van Kampen diagrams
such that their boundaries are read clockwise.
Accordingly, the orientations for annular diagrams are chosen
so that their outer boundaries are read clockwise,
while the inner boundaries are read counterclockwise.

\section{Proof of Theorem~\ref{thm:image_extension_theorem}}
\label{sec:proof_of_the_image_extension_theorem}

Throughout this section, we use $\mathbf{H}$ and $\mathbf{G}$
to denote the groups as stated in Theorem~\ref{thm:image_extension_theorem}.
The following lemma provides a proof for part (i) of Theorem~\ref{thm:image_extension_theorem}.

\begin{lemma}
\label{lem:H_subgroup_of_G}
The group $\mathbf{G}$ contains $\mathbf{H}$ as a subgroup.
\end{lemma}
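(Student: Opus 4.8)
The plan is to recognise $\mathbf{G}$ as an ordinary HNN-extension of the free product $\mathbf{H}\ast\langle x\rangle$ and then to read off the embedding from the normal form theorem. First I would unwind the relative presentation~\eqref{equ:image_extension_theorem_G}: treating $x$ and $t$ as free generators, $\mathbf{G}$ is the quotient of $\mathbf{H}\ast\langle x\rangle\ast\langle t\rangle$ by the normal closure of the single word $t^{-1}vt[u,x]^{-1}$, so that
\[
\mathbf{G}=\bigl\langle\, B,\ t \svert t^{-1}vt=[u,x]\,\bigr\rangle,\qquad B:=\mathbf{H}\ast\langle x\rangle .
\]
This exhibits $\mathbf{G}$ as an HNN-extension of $B$ with stable letter $t$ and associated subgroups $\langle v\rangle$ and $\langle[u,x]\rangle$, \emph{provided} the generator-to-generator assignment $v\mapsto[u,x]$ extends to an isomorphism $\langle v\rangle\to\langle[u,x]\rangle$.

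Verifying this last point is the only step that needs care, and it is the part I would treat as the main obstacle (modest as it is). By hypothesis $v$ has infinite order, so $\langle v\rangle\cong\ZZ$. For $[u,x]=uxu^{-1}x^{-1}$, note that $u\neq1$ in $\mathbf{H}$ (this is all we use of the hypotheses on $u$ here) and $x$ has infinite order in $\langle x\rangle$; hence, in the free product $B$, the word $u\,x\,u^{-1}\,x^{-1}$ is reduced with syllables alternating between the two free factors, and it is cyclically reduced of syllable length $4$. A cyclically reduced word of syllable length at least two in a free product has infinite order, so $[u,x]$ has infinite order and $\langle[u,x]\rangle\cong\ZZ$. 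Consequently $v\mapsto[u,x]$ induces an isomorphism $\langle v\rangle\xrightarrow{\ \sim\ }\langle[u,x]\rangle$, and $\mathbf{G}$ is a genuine HNN-extension of $B$.

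It then remains only to compose two standard embeddings. By the normal form (Britton) theorem for HNN-extensions, the base group $B=\mathbf{H}\ast\langle x\rangle$ embeds into $\mathbf{G}$; and $\mathbf{H}$ embeds into $\mathbf{H}\ast\langle x\rangle$ as a free factor. Composing these inclusions shows that $\mathbf{H}$ is a subgroup of $\mathbf{G}$, which proves the lemma. All of this is routine combinatorial group theory once the infinite-order claim for $[u,x]$ — which is exactly what legitimises the HNN-extension — has been established.
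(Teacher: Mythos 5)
Your proposal is correct and follows essentially the same route as the paper: both recognize $\mathbf{G}$ as an HNN-extension of the free product $\mathbf{H}\ast\langle x\rangle$, justify this by showing $[u,x]$ has infinite order via the normal form theorem for free products (so that $v\mapsto[u,x]$ gives an isomorphism of infinite cyclic associated subgroups), and then compose the standard embeddings $\mathbf{H}\hookrightarrow\mathbf{H}\ast\langle x\rangle\hookrightarrow\mathbf{G}$.
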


\begin{proof}
By the hypothesis of Theorem~\ref{thm:image_extension_theorem},
the element $v$ of $\mathbf{H} \ast \langle x \rangle$ has infinite order.
The element $[u,x]$ of $\mathbf{H} \ast \langle x \rangle$ also has infinite order,
since $[u,x]^m \neq 1$ in $\mathbf{H} \ast \langle x \rangle$
for any $m \in \mathbb{Z} \setminus \{0\}$ by the normal form theorem for free products.
So there is an isomorphism $\iota$ between two
infinite cyclic subgroups $\langle v \rangle$ and $\langle [u,x] \rangle$
of $\mathbf{H} \ast \langle x \rangle$
such that $\iota(v)=[u,x]$.
This yields that
$\mathbf{G}$ can be seen as an HNN-extension from $\mathbf{H} \ast \langle x \rangle$
with associated isomorphism $\iota$ as follows:
\[
\mathbf{G}=\langle \mathbf{H} \ast \langle x \rangle, t \svert t^{-1}vt=\iota(v) \rangle.
\]
Thus $\mathbf{G}$ contains $\mathbf{H} \ast \langle x \rangle$ as a subgroup.
Since $\mathbf{H} \ast \langle x \rangle$ clearly contains $\mathbf{H}$ as a subgroup,
$\mathbf{G}$ contains $\mathbf{H}$ as a subgroup, as desired.
\end{proof}

To establish part (ii) of Theorem~\ref{thm:image_extension_theorem},
we first prove the following two lemmas, which will play a crucial role in the proof.
It is evident that the free product $\mathbf{H} \ast \langle x \rangle$
is hyperbolic relative to the collection of subgroups $\{\mathbf{H}, \langle x \rangle \}$.
Since $\langle x \rangle$ is an infinite cyclic group which is hyperbolic,
$\mathbf{H} \ast \langle x \rangle$ becomes hyperbolic relative to $\mathbf{H}$.
Taking this perspective into account, we prove the following

\begin{lemma}
\label{lem:hyperbolic element}
The commutator $[u,x]$ is a hyperbolic element in $\mathbf{H} \ast \langle x \rangle$
which is regarded as a relatively hyperbolic group with respect to the subgroup $\mathbf{H}$.
\end{lemma}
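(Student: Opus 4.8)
To prove this, recall that being a hyperbolic element in a relatively hyperbolic group $\mathbf{H} \ast \langle x \rangle$ (with respect to the peripheral structure $\{\mathbf{H}\}$) means: (a) $[u,x]$ has infinite order, and (b) $[u,x]$ is not conjugate into $\mathbf{H}$. Point (a) was already observed in the proof of Lemma~\ref{lem:H_subgroup_of_G}: by the normal form theorem for free products, $[u,x]^m = (ux u^{-1} x^{-1})^m \neq 1$ for all $m \neq 0$, since $u$ is nontrivial in $\mathbf{H}$ and hence this is a reduced word of syllable length $4m$ in the free product. So the main content is point (b).

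For point (b), the plan is to use the standard conjugacy criterion for free products (from Lyndon–Schupp~\cite{Lyndon_Schupp}): an element represented by a cyclically reduced word of syllable length $\geq 2$ is never conjugate to an element of syllable length $\leq 1$, and in particular never conjugate into a free factor. Here $w := uxu^{-1}x^{-1}$ is already cyclically reduced in $\mathbf{H} \ast \langle x \rangle$: its syllables alternate between $\mathbf{H}$ and $\langle x \rangle$, namely $u \in \mathbf{H}\setminus\{1\}$, then $x$, then $u^{-1} \in \mathbf{H}\setminus\{1\}$, then $x^{-1}$, and the first and last syllables ($u$ and $x^{-1}$) lie in different factors, so no cyclic cancellation occurs. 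Thus $w$ has cyclic syllable length $4 \geq 2$, so it cannot be conjugate to any element of $\mathbf{H}$, which has syllable length $\leq 1$. This establishes (b), and together with (a) shows $[u,x]$ is hyperbolic.

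The only point requiring care is verifying that $w$ is genuinely cyclically reduced, which hinges on $u \neq 1$ in $\mathbf{H}$ — this is guaranteed by the hypothesis $u^2 \neq 1$ (so certainly $u \neq 1$), and indeed $u$ being nontrivial is all that is needed here. I do not anticipate a real obstacle; the lemma is essentially a direct application of the normal form and conjugacy theorems for free products, and the role of $x$ as a separate free factor (rather than an arbitrary element of $\mathbf{H}$) is exactly what forces the commutator to have syllable length $4$ and therefore to escape every conjugate of $\mathbf{H}$.
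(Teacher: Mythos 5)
Your proposal is correct and takes essentially the same route as the paper: infinite order follows from the normal form theorem (as already noted in the proof of Lemma~\ref{lem:H_subgroup_of_G}), and non-conjugacy into $\mathbf{H}$ follows from the conjugacy theorem for free products applied to the cyclically reduced word $uxu^{-1}x^{-1}$ of syllable length $4$. The paper phrases the latter step via cyclic permutations (citing Magnus--Karrass--Solitar), while you phrase it via syllable length, but these are the same theorem and the same argument.
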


\begin{proof}
By the proof of Lemma~\ref{lem:H_subgroup_of_G},
$[u,x]$ is an element of $\mathbf{H} \ast \langle x \rangle$ of infinite order.
To prove that $[u,x]$ is a hyperbolic element in $\mathbf{H} \ast \langle x \rangle$,
it suffices to show that $[u,x]$ is not conjugate to
any element of $\mathbf{H}$.
Assume on the contrary that there exists
an element $h$ of $\mathbf{H}$ such that
$h$ is conjugate to $[u,x]$ in $\mathbf{H} \ast \langle x \rangle$.
But this is impossible,
since every element conjugate to $[u,x]$ in $\mathbf{H} \ast \langle x \rangle$
is a cyclic permutation of $[u,x]$ by Conjugacy Theorem for free products
~\cite[Theorem~4.2]{Magnus_Karrass_Solitar}.
\end{proof}

\begin{lemma}
\label{lem:unique_elementary_subgroup}
The unique maximal elementary subgroup $E([u,x])$ of $\mathbf{H} \ast \langle x \rangle$
is precisely the cyclic subgroup $\langle [u,x] \rangle$.
\end{lemma}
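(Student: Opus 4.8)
The plan is to work inside the free product $\mathbf{G}' := \mathbf{H} \ast \langle x \rangle$ and to exploit that $w := [u,x] = uxu^{-1}x^{-1}$ is a cyclically reduced word of syllable length $4$ there; it is cyclically reduced since its first and last syllables, $u$ and $x^{-1}$, lie in different free factors, and it is genuinely of length $4$ because $u$ and $x$ are nontrivial. As $\langle w \rangle \le E(w)$ is clear, it suffices to take $f \in E(w)$, say $f^{-1}w^{n}f = w^{\varepsilon n}$ with $n \in \NN$ and $\varepsilon \in \{+1,-1\}$, and to prove $f \in \langle w \rangle$.

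First I would record two combinatorial facts about $w$ in $\mathbf{G}'$. (a) $w$ is not a proper power: if $w = z^{m}$ with $m \ge 2$, comparing syllable lengths forces $m = 2$ and $z$ of syllable length $2$ (length $1$ is impossible, as then $z^{2}$ would lie in a single factor), whence $z = z_{1}z_{2}$ and $z_{1}z_{2}z_{1}z_{2}$ equals a cyclic permutation of $uxu^{-1}x^{-1}$; running through the four cyclic permutations, each forces $u = u^{-1}$ or $x = x^{-1}$, contradicting $u^{2}\ne 1$ and the infinite order of $x$. (b) For every $n \in \NN$, $w^{n}$ is not conjugate to $w^{-n}$: by the conjugacy theorem for free products — equivalently, by Schupp's Lemma (Theorem~\ref{thm:schupps_lemma}) applied to the presentation of $\mathbf{G}'$ obtained by adjoining the generator $x$ to a presentation of $\mathbf{H}$ with cyclically reduced relators — such a conjugacy would make the cyclic word $(w^{-n})$ a cyclic permutation of $(w^{n})$; inspecting the cyclic permutations of $(w^{n})$ that begin with an $\mathbf{H}$-syllable against those of $(w^{-n})$ again forces $u = u^{-1}$ or $x = x^{-1}$, a contradiction.

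By (b) the case $\varepsilon = -1$ cannot occur, so $f^{-1}w^{n}f = w^{n}$, i.e.\ $f \in C_{\mathbf{G}'}(w^{n})$. Since $w$ is cyclically reduced of syllable length $\ge 2$, it is not conjugate into a free factor; hence $w^{n}$ has a well-defined root $r$ (the element, not a proper power, with $w^{n} \in \langle r \rangle$ and $w^{n} = r^{k}$ for maximal $k$) and $C_{\mathbf{G}'}(w^{n}) = \langle r \rangle$. By (a), $w$ itself is that root, so $C_{\mathbf{G}'}(w^{n}) = \langle w \rangle$ and therefore $f \in \langle w \rangle = \langle [u,x] \rangle$. (If a self-contained argument is preferred, this centralizer computation can be extracted instead from van Kampen's Lemma (Theorem~\ref{thm:van_Kampen}) applied to the word $f w^{n} f^{-1} w^{-n}$, analysing the resulting reduced diagram over the free product, in which every $2$-cell is an $\mathbf{H}$-cell.) Combined with the obvious inclusion, this gives $E([u,x]) = \langle [u,x]\rangle$.

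The main obstacle is step (b): a priori $f$ could invert $w^{n}$, and this must be excluded before the centralizer argument applies. It is precisely here — and again in step (a), which is what keeps $C_{\mathbf{G}'}(w^{n})$ from being a cyclic group strictly larger than $\langle w \rangle$ — that the hypotheses $u^{2}\ne 1$ and the infinite order of $x$ (hence $x^{2}\ne 1$) are used.
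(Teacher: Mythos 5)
Your proof is correct, but it reaches the conclusion by a different route than the paper. Where the paper works directly with a reduced annular diagram supplied by Schupp's Lemma (Theorem~\ref{thm:schupps_lemma}) --- ruling out the inverting case by locating a circular subdiagram with boundary label $u^2$, and handling the centralizing case by showing the minimal diagram is trivial and reading $f$ off the path $p$ --- you instead invoke classical free-product structure theory: the conjugacy theorem (cyclically reduced conjugates of syllable length $\ge 2$ differ by a cyclic permutation of syllables) to kill the case $f^{-1}[u,x]^n f=[u,x]^{-n}$, and the fact that the centralizer of an element not conjugate into a free factor is infinite cyclic, generated by its root, to finish. Your step (a), that $[u,x]$ is not a proper power, is exactly what is needed to identify that root with $[u,x]$ itself, and is a point the paper's diagrammatic Case~2 absorbs implicitly into the minimality/triviality of $\Delta$. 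The underlying combinatorics is parallel --- your step (b) is the paper's Case~1, your centralizer step is the paper's Case~2, and both arguments consume the hypotheses $u^2\ne 1$ and $x$ of infinite order at the same junctures --- but your version trades the bespoke diagram analysis for citations to standard theorems about free products (which the paper itself already uses in Lemma~\ref{lem:hyperbolic element}), arguably making the proof shorter and less dependent on checking the possible shapes of an annular diagram; the paper's version is more self-contained within its van Kampen/annular framework. One small caveat: your parenthetical suggestion that step (b) follows "equivalently" from Schupp's Lemma is looser than the direct appeal to the conjugacy theorem, since extracting the cyclic-permutation conclusion from an annular diagram requires the extra observation (made in the paper) that no $2$-cell can meet an edge labeled $x^{\pm 1}$; but as you rely on the conjugacy theorem as the primary justification, this does not affect correctness.
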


\begin{proof}
Suppose that $f$ is an element of $E([u,x])$,
that is, $[u,x]^n=f[u,x]^{\pm n}f^{-1}$ for some $n \in \mathbb{N}$.
Then by Theorem~\ref{thm:schupps_lemma}, there exist a reduced annular diagram $\Delta$ over
$\mathbf{H} \ast \langle x \rangle$ and a path $p$ lying in $\Delta$ such that
\begin{enumerate}[\indent \rm (i)]
	\item $p_{-} \in \partial_\text{out} \Delta$, where $p_{-}$ denotes the origin of $p$;
	\item $p_{+} \in \partial_\text{inn} \Delta$, where $p_{+}$ denotes the terminus of $p$;
	\item $\Lab(\partial_\text{out} \Delta|_{p_{-}})\equiv [u,x]^n$;
	\item $\Lab(\partial_\text{inn} \Delta|_{p_{+}}) \equiv [u,x]^{\mp n}$;
	\item $\Lab(p) \equiv f$.	
\end{enumerate}

In particular, we choose $\Delta$ to be {\em minimal},
meaning that it contains the fewest number of $2$-cells among all diagrams satisfying
conditions (i)-(v).
Since there are no defining relators involving the letter $x^{\pm 1}$ in $\mathbf{H} \ast \langle x \rangle$, there cannot be any $2$-cell, say $\pi$, in $\Delta$ such that $\partial \pi \cap \partial_\text{out}\Delta$ contains an edge labeled $x^{\pm 1}$.
Hence, every edge labeled $x^{\pm 1}$ in $\partial_\text{out}\Delta$
must overlap with an edge labeled $x^{\mp 1}$, respectively, in $\partial_\text{inn}\Delta$.

We consider two cases separately, depending on whether
$\Lab(\partial_\text{inn} \Delta|_{p_{+}}) \equiv [u,x]^n$
or $\Lab(\partial_\text{inn} \Delta|_{p_{+}}) \equiv [u,x]^{-n}$.

\medskip
\noindent{\bf Case~1.} $\Lab(\partial_\text{out} \Delta|_{p_{-}})\equiv [u,x]^n $ and
$\Lab(\partial_\text{inn} \Delta|_{p_{+}}) \equiv [u,x]^n$.

\medskip
Recalling that the outer boundary $\partial_\text{out}\Delta$ is read clockwise,
while the inner boundary $\partial_\text{inn}\Delta$ is read counterclockwise,
we see that $\Delta$ can only have the shape depicted in Figure~\ref{fig:annular_diagram_2}.
As depicted in this figure, $\Delta$ contains a circular subdiagram with a boundary label $(u^2)$. According to Theorem~\ref{thm:van_Kampen},
this implies that $u^2=1$ in $\mathbf{H} \ast \langle x \rangle$,
which contradicts the hypothesis of Theorem~\ref{thm:image_extension_theorem}
stating that $u^2 \neq 1$ in $\mathbf{H}$. Therefore, this case cannot occur.

\begin{figure}[h]
\begin{center}
\includegraphics[width=0.5\columnwidth]{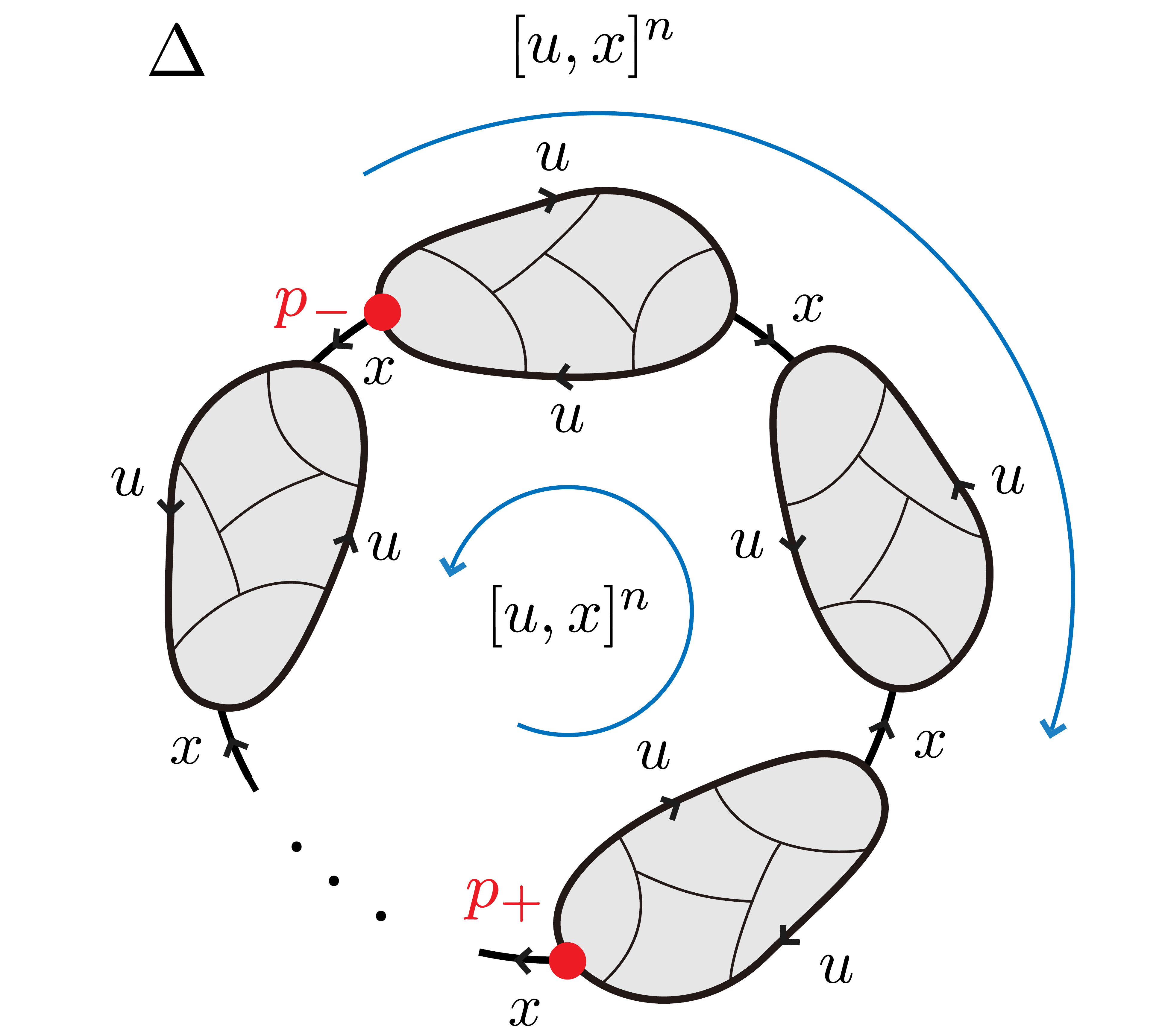}
\end{center}
\caption{The only possible shape of a minimal diagram $\Delta$ satisfying the hypothesis of Case~1
in the proof of Lemma~\ref{lem:unique_elementary_subgroup}}
\label{fig:annular_diagram_2}    		   	
\end{figure}

\medskip
\noindent {\bf Case~2.} $\Lab(\partial_\text{out} \Delta|_{p_{-}}) \equiv [u,x]^n$ and
$\Lab(\partial_\text{inn} \Delta|_{p_{+}}) \equiv [u,x]^{-n}$.

\medskip
In this case, if we read both the outer boundary $\partial_\text{out}\Delta$
and the inner boundary $\partial_\text{inn}\Delta$ starting from an edge labeled $x$
in the clockwise orientation,
they are both labeled as $(xu^{-1}x^{-1}u)^n$.
Since $\Delta$ is assumed to be minimal,
this yields that $\Delta$ is a trivial diagram,
meaning that $\Delta$ does not contain any $2$-cells.
Hence, if the path $p$ follows a clockwise direction,
$\Lab(p)$ starts with $u$ and ends with $x^{-1}$.
On the other hand, if the path $p$ follows a counterclockwise direction, $\Lab(p)$ starts with $x$ and ends with $u^{-1}$.
In either case, we have $f \equiv \Lab(p) \in \langle [u,x] \rangle$,
which implies that $E([u,x])=\langle [u,x] \rangle$, as desired.
\end{proof}

\begin{figure}[h]
\begin{center}
	\includegraphics[width=0.5\columnwidth]{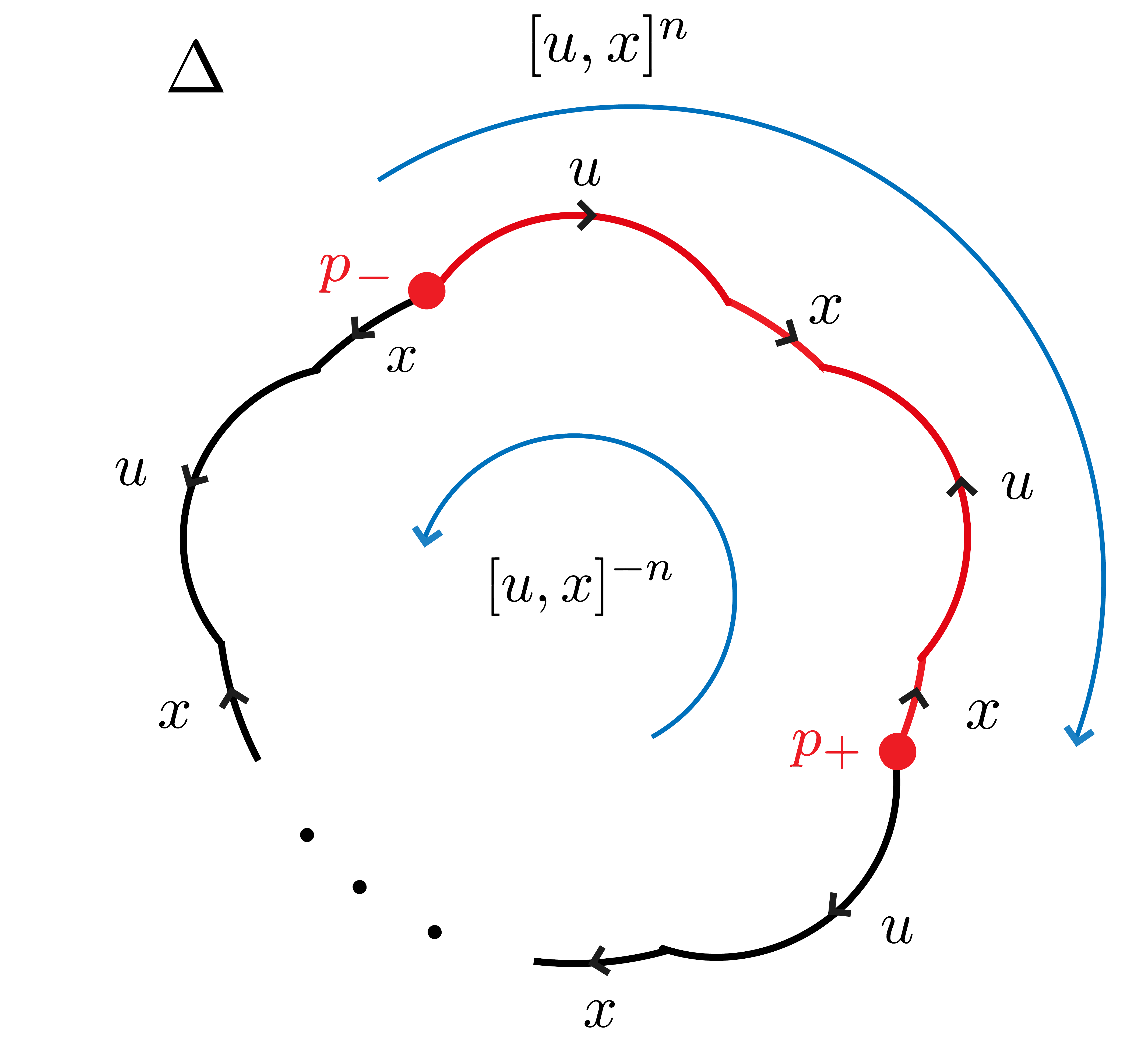}
\end{center}
\caption{The only possible shape of a minimal diagram $\Delta$ satisfying the hypothesis of Case~2
in the proof of Lemma~\ref{lem:unique_elementary_subgroup}}
\label{fig:annular_diagram_3}  	 	 	
\end{figure}

We are now ready to prove part (ii) of Theorem~\ref{thm:image_extension_theorem}.
Taking into consideration that $\mathbf{H} \ast \langle x \rangle$
is hyperbolic relative to $\mathbf{H}$, combining Lemma~\ref{lem:hyperbolic element} with Theorem~\ref{thm:hyperbolically_embedded} implies that
$E([u,x])$ is hyperbolically embedded into
$\mathbf{H} \ast \langle x \rangle$.
As a result, $\mathbf{H} \ast \langle x \rangle$
is hyperbolic relative to the collection of subgroups $\{\mathbf{H}, E([u,x])\}$.
By further combining this with the outcome of Lemma~\ref{lem:unique_elementary_subgroup},
we conclude that $\mathbf{H} \ast \langle x \rangle$ is hyperbolic relative to the collection of subgroups $\{\mathbf{H}, \langle [u,x] \rangle\}$.

At this point, recall that
$\mathbf{G}=\langle  \mathbf{H} \ast \langle x \rangle, t \svert t^{-1}vt= [u,x] \rangle$.
Since both $v \in \mathbf{H}$ and $[u,x]$ are elements of infinite order,
there exists a monomorphism $\zeta: \langle [u,x] \rangle \rightarrow \mathbf{H}$
such that $\zeta([u,x])=v$.
Consequently, according to Theorem~\ref{thm:Osin_combination_theorem},
we can deduce that $\mathbf{G}$ is hyperbolic relative to $\mathbf{H}$,
thereby completing the proof of part (ii) of Theorem~\ref{thm:image_extension_theorem}.

It remains to prove part (iii) of Theorem~\ref{thm:image_extension_theorem}.
Since $\tilde{\psi}$ is clearly an extension of $\psi$,
it is sufficient to show that $v \in \im \tilde{\psi}$.
Since $u \in \im \psi$ by the hypothesis of Theorem~\ref{thm:image_extension_theorem},
there exists $y \in \mathbf{H}$ such that $\psi(y)=u$.
Therefore, we have $\tilde{\psi}(t[y,x]t^{-1})=v$,
which implies that $v \in \text{im}\tilde{\psi}$, as desired.

\section{Examples}
\label{sec:proof_proposition}

In this section, we provide two examples
of a non-Hopfian relatively hyperbolic group with respect to a Hopfian subgroup
to demonstrate the application of Corollary~\ref{cor:image_extension_theorem}.
In both examples, we employ groups, as peripheral subgroups, that were previously established as Hopfian~\cite{Kim_Lee, Andreadakis}.

\begin{proposition}
\label{prop:first_example}
Let $\mathbf{H}$ be a group given by the presentation
\[
\begin{aligned}
\mathbf{H}=\langle b, c, s, k \svert
&b^2=c^9=1, \ b^{-1}cb=c^{-1},\\
&s^{-1}bs=bc^{-3}, \ s^{-1}cs=c,\\
&k^{-1}sk=s^3
\rangle,
\end{aligned}
\]
and suppose that $\mathbf{G}$ is a group given by the relative presentation
\[
\mathbf{G}=\langle \mathbf{H}, x, t \svert  t^{-1}\{(ks^{-1}k^{-1})b(ksk^{-1})cb^{-1}\}t= [c^3,x]  \rangle.
\]
Then $\mathbf{G}$ is a non-Hopfian relatively hyperbolic group with respect to a Hopfian subgroup $\mathbf{H}$.		
\end{proposition}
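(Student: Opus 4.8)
The plan is to derive the statement from Corollary~\ref{cor:image_extension_theorem}. First one reads off from the relative presentation of $\mathbf{G}$ that it is exactly $\mathbf{G}=\langle\mathbf{H},x,t\svert t^{-1}vt=[u,x]\rangle$, the presentation of Theorem~\ref{thm:image_extension_theorem}, with
\[
u=c^{3}\qquad\text{and}\qquad v=(ks^{-1}k^{-1})\,b\,(ksk^{-1})\,c\,b^{-1},
\]
because $[c^{3},x]=[u,x]$. So it suffices to exhibit an endomorphism $\psi$ of $\mathbf{H}$ for which $u$ and $v$ are nontrivial, $u\in\ker\psi\cap\im\psi$, $v\in\ker\psi\setminus\im\psi$, $u^{2}\ne1$ and $v$ has infinite order, and then to verify the two extra hypotheses of the corollary, namely that $\mathbf{H}$ is Hopfian and that the extension $\tilde\psi$ is onto.

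Next I would recognize $\mathbf{H}$ as the group $H_{2}$ of Theorem~\ref{thm:the_first_example}, with $k$ in the role of $t$: its defining relations present it as the dihedral group $H_{0}=\langle b,c\svert b^{2}=c^{9}=1,\ b^{-1}cb=c^{-1}\rangle$ of order $18$, then $H_{1}=\langle H_{0},s\svert s^{-1}bs=bc^{-3},\ s^{-1}cs=c\rangle$, in which $s$ conjugates $H_{0}$ onto itself by an automorphism of order $3$, whence $H_{1}=H_{0}\rtimes\langle s\rangle$ with $\langle s\rangle\cong\ZZ$, and finally $\mathbf{H}=\langle H_{1},k\svert k^{-1}sk=s^{3}\rangle$, a genuine HNN-extension since $\langle s\rangle\cong\langle s^{3}\rangle$. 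Hence $\langle b,c\rangle\hookrightarrow H_{1}\hookrightarrow\mathbf{H}$, so $c$ has order $9$ and $u^{2}=c^{6}\ne1$, while $\mathbf{H}$ is Hopfian by \cite{Kim_Lee}. Then I would define $\psi\colon\mathbf{H}\to\mathbf{H}$ on generators by $b\mapsto b$, $c\mapsto c^{3}$, $s\mapsto s^{3}$, $k\mapsto k$. That this respects the six defining relations of $\mathbf{H}$ is routine; the one check worth recording is $\psi(s^{-1}bs)=s^{-3}bs^{3}=b=b\,c^{-9}=\psi(bc^{-3})$, where $s^{-3}bs^{3}=b$ comes from iterating $s^{-1}bs=bc^{-3}$ (and $s^{-1}cs=c$). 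Now $u=c^{3}=\psi(c)\in\im\psi$ while $\psi(u)=\psi(c)^{3}=c^{9}=1$, so $u\in\ker\psi\cap\im\psi$.

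For $v$: using $ks^{3}k^{-1}=s$ (rearranging $k^{-1}sk=s^{3}$) together with $s^{-1}bs=bc^{-3}$, one computes
\[
\psi(v)=(ks^{-3}k^{-1})\,b\,(ks^{3}k^{-1})\,c^{3}\,b^{-1}=s^{-1}bs\,c^{3}b^{-1}=(bc^{-3})\,c^{3}b^{-1}=1,
\]
so $v\in\ker\psi$. To see $v\notin\im\psi$ I would pass to the quotient $\overline{\mathbf{H}}:=\mathbf{H}/\llangle s,k\rrangle$; here $s^{-1}bs=bc^{-3}$ forces $\bar c^{3}=1$, so $\overline{\mathbf{H}}\cong\langle b,c\svert b^{2}=c^{3}=1,\ b^{-1}cb=c^{-1}\rangle$, the image of $\im\psi=\langle b,c^{3},s^{3},k\rangle$ lies in $\langle\bar b\rangle$, whereas $v$ maps to $\bar b\bar c\bar b^{-1}=\bar c^{-1}\notin\langle\bar b\rangle$. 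Finally $v$ has infinite order: written over $\mathbf{H}=\langle H_{1},k\svert k^{-1}sk=s^{3}\rangle$ the word $v=k\,(s^{-1})\,k^{-1}\,(b)\,k\,(s)\,k^{-1}\,(cb^{-1})$ is cyclically $k$-reduced, since none of its $H_{1}$-syllables $s^{-1},b,s,cb^{-1}$ lies in the relevant associated subgroup $\langle s^{3}\rangle$ or $\langle s\rangle$ (for the latter, $\langle s\rangle\cap\langle b,c\rangle=1$); so by Britton's lemma every power $v^{n}$ with $n\ne0$ is $k$-reduced of syllable length $4|n|$, hence nontrivial.

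It remains to show that $\tilde\psi$ is surjective, which I expect to be the crux. As $\tilde\psi$ fixes $x$ and $t$ and restricts to $\psi$ on $\mathbf{H}$, its image contains $b$, $c^{3}$, $s^{3}$, $k$ — hence also $s=ks^{3}k^{-1}$ — and $x$, $t$. From the defining relation $t^{-1}vt=[c^{3},x]$ of $\mathbf{G}$ one gets $v=t[c^{3},x]t^{-1}\in\im\tilde\psi$, and since $b,k,s\in\im\tilde\psi$ the identity $v=(ks^{-1}k^{-1}bksk^{-1})\,c\,b^{-1}$ solves for $c=(ks^{-1}k^{-1}bksk^{-1})^{-1}\,v\,b\in\im\tilde\psi$. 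Thus $\im\tilde\psi$ contains all the generators $b,c,s,k,x,t$ of $\mathbf{G}$, so $\tilde\psi$ is onto, and Corollary~\ref{cor:image_extension_theorem} yields the statement. The principal obstacle is precisely this surjectivity: $\psi$ must be picked so that $\psi(\mathbf{H})$ together with $x,t$ regenerates $\mathbf{H}$ inside $\mathbf{G}$, which is exactly why $\psi$ is taken to fix $b,s,k$ and to collapse only the factor $c^{3}=u$ hidden inside $c$ — the one element that the relator $t^{-1}vt=[u,x]$ is built to recover. The other verifications (well-definedness of $\psi$, that $v\notin\im\psi$, and the infinite order of $v$) are routine and are where the arithmetic of the presentation, $b^{2}=1$ and the order of $c$ being $9$, gets used.
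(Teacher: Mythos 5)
Your proposal is correct and follows essentially the same route as the paper's proof: the same choice of $u=c^3$ and $v=(ks^{-1}k^{-1})b(ksk^{-1})cb^{-1}$, the same endomorphism $\psi$ ($b\mapsto b$, $c\mapsto c^3$, $s\mapsto s^3$, $k\mapsto k$), and the same surjectivity argument for $\tilde\psi$ (your ``solve for $c$'' step is exactly the paper's explicit preimage $(k^2s^{-1}k^{-2})b^{-1}(k^2sk^{-2})t[c,x]t^{-1}b$). You merely supply the verifications the paper dismisses as routine --- well-definedness of $\psi$, the quotient argument for $v\notin\im\psi$, and Britton's lemma for the infinite order of $v$ --- and these are all correct.
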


\begin{proof}
Let $u:=c^3$ and $v:=(ks^{-1}k^{-1})b(ksk^{-1})cb^{-1}$.
Then $u$ and $v$ satisfy the hypotheses of Theorem~\ref{thm:image_extension_theorem}.
The reason is as follows.
Note that the group $\mathbf{H}$
can be interpreted as an HNN-extension of the group
\[
\mathbf{H}_0=\langle b, c, s \svert b^2=c^9=1, \ b^{-1}cb=c^{-1}, \ s^{-1}bs=bc^{-3}, \ s^{-1}cs=c \rangle
\]
with stable letter $k$.
Then $u$ is a nontrivial element of $\mathbf{H}$ with $u^2=c^6 \neq 1$.
Also $v$ is a nontrivial element of $\mathbf{H}$ with infinite order by Britton's Lemma.
Let $\psi$ be an endomorphism of $\mathbf{H}$ defined by
\[
\psi(b)=b, \quad \psi(c)=c^3, \quad \psi(s)=s^3 \quad \text{and} \quad \psi(k)=k.
\]
Then it is easy to see that $u \in \ker \psi \cap \im \psi$ and $v \in \ker \psi \setminus \im \psi$.

Moreover, $\mathbf{H}$ is Hopfian by the result of \cite{Kim_Lee}.
Also $\tilde{\psi}$ is surjective,
since $b, k \in \im \psi \subseteq \im \tilde{\psi}$ by the definition of $\psi$, $c=\tilde{\psi}((k^2s^{-1}k^{-2})b^{-1}(k^2sk^{-2})t[c,x]t^{-1}b)$ and since $s=\tilde{\psi}(ksk^{-1})$.
Thus by Corollary~\ref{cor:image_extension_theorem}, the result follows.
\end{proof}

\begin{proposition}
\label{prop:second_example}
Let $\mathbf{H}$ be a group given by the presentation
\[
\mathbf{H}=\langle a,b,s \svert [a,b]=1,\ s^{-1}a^2s=a^4 \rangle,
\]
and suppose that $\mathbf{G}$ is a group given by the relative presentation
\[
\mathbf{G}=\langle \mathbf{H}, x, t \svert  t^{-1}(s^{-1}asa^{-2})t= [[sa^2s^{-1},b],x] \rangle.
\]	
Then $\mathbf{G}$ is a non-Hopfian relatively hyperbolic group with respect to a subgroup $\mathbf{H}$.
\end{proposition}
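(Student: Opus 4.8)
The plan is to recognize $\mathbf{G}$ as an instance of the construction in Theorem~\ref{thm:image_extension_theorem} and then apply Corollary~\ref{cor:image_extension_theorem}. I would put
\[
u := [sa^2s^{-1},\,b] \qquad\text{and}\qquad v := s^{-1}asa^{-2},
\]
so that the defining relation of $\mathbf{G}$ reads $t^{-1}vt=[u,x]$, which is exactly the shape required in \eqref{equ:image_extension_theorem_G}. Then I would introduce the endomorphism $\psi$ of $\mathbf{H}$ determined by $\psi(a)=a^2$, $\psi(b)=b$, $\psi(s)=s$. The first step is to check this is well defined on the given presentation: the relator $[a,b]$ maps to $[a^2,b]=1$, and $s^{-1}a^2sa^{-4}$ maps to $s^{-1}a^4sa^{-8}=(s^{-1}a^2s)^2a^{-8}=a^8a^{-8}=1$.

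Next I would verify that $u$ and $v$ meet the hypotheses of Theorem~\ref{thm:image_extension_theorem}. Using $s^{-1}a^2s=a^4$ in the form $sa^4s^{-1}=a^2$ one gets $\psi(u)=[sa^4s^{-1},b]=[a^2,b]=1$ and $\psi(v)=s^{-1}a^2sa^{-4}=a^4a^{-4}=1$, so $u,v\in\ker\psi$, while $u=\psi([sas^{-1},b])\in\im\psi$. To show $v\notin\im\psi$ I would abelianize: $\mathbf{H}^{\mathrm{ab}}\cong(\ZZ/2\ZZ)\oplus\ZZ\oplus\ZZ$ with generators the images $\bar a,\bar b,\bar s$ and $2\bar a=0$; the image of $\im\psi$ lies in $\langle\bar b,\bar s\rangle$, whereas $v\mapsto\bar a-2\bar a=\bar a\neq0$. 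Finally, regarding $\mathbf{H}$ as the HNN-extension of $\langle a,b\rangle\cong\ZZ^2$ with stable letter $s$ and associated subgroups $\langle a^2\rangle,\langle a^4\rangle$ (via $a^2\mapsto a^4$), Britton's Lemma applied to the $s$-syllable decompositions shows that $u^2$ and $v^n$ (for $n\neq0$) are reduced; hence $u^2\neq1$ and $v$ has infinite order, and in particular $u,v$ are nontrivial.

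With these verifications in hand, Theorem~\ref{thm:image_extension_theorem} gives that $\mathbf{G}$ contains $\mathbf{H}$, that $\mathbf{G}$ is hyperbolic relative to $\mathbf{H}$, and that $\tilde\psi$ (defined by $\tilde\psi|_{\mathbf{H}}=\psi$, $\tilde\psi(x)=x$, $\tilde\psi(t)=t$) is an extension of $\psi$ with $v\in\im\tilde\psi$; concretely $\tilde\psi\bigl(t\,[[sas^{-1},b],x]\,t^{-1}\bigr)=t[u,x]t^{-1}=v$. To invoke Corollary~\ref{cor:image_extension_theorem} it then remains to observe that $\mathbf{H}$ is Hopfian by \cite{Andreadakis}, and that $\tilde\psi$ is surjective: its image contains $a^2,b,s,x,t$ and $v$, and since $v=s^{-1}asa^{-2}$ yields the identity $a=s\,v\,a^2\,s^{-1}$, we also get $a\in\im\tilde\psi$, whence $\im\tilde\psi=\mathbf{G}$. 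Corollary~\ref{cor:image_extension_theorem} then gives that $\mathbf{G}$ is a non-Hopfian relatively hyperbolic group with respect to the (Hopfian) subgroup $\mathbf{H}$.

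The only genuinely external ingredient is the Hopficity of $\mathbf{H}=\langle a,b,s\mid[a,b]=1,\ s^{-1}a^2s=a^4\rangle$, which I would import from \cite{Andreadakis}. The place where care is needed is the Britton's-Lemma bookkeeping ruling out pinches in $u^2$ and in the powers $v^n$ (using that $a,b\notin\langle a^2\rangle$ and $a^{\pm2},b\notin\langle a^4\rangle$), together with the computation of $\mathbf{H}^{\mathrm{ab}}$ used to separate $v$ from $\im\psi$; everything else is a direct check that the data $(\mathbf{H},\psi,u,v)$ satisfies the hypotheses of Theorem~\ref{thm:image_extension_theorem} and Corollary~\ref{cor:image_extension_theorem}.
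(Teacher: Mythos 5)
Your proposal is correct and follows essentially the same route as the paper's own proof: the same endomorphism $\psi(a)=a^2,\ \psi(b)=b,\ \psi(s)=s$, the same verification that $(\mathbf{H},\psi,u,v)$ satisfies the hypotheses of Theorem~\ref{thm:image_extension_theorem}, Hopficity of $\mathbf{H}$ imported from \cite{Andreadakis}, and the same surjectivity computation recovering $a$ from $v=\tilde\psi(t[[sas^{-1},b],x]t^{-1})$. The only divergence is that the paper's proof sets $u:=[sa^2s^{-1},b^2]$, which appears to be a typo, since the defining relator of $\mathbf{G}$ and the paper's own surjectivity formula both use $[sa^2s^{-1},b]$; your choice $u=[sa^2s^{-1},b]$ is the internally consistent one, and your added details (well-definedness of $\psi$, the abelianization argument for $v\notin\im\psi$, the Britton's Lemma bookkeeping for $u^2\neq 1$ and the infinite order of $v$) merely flesh out steps the paper declares ``straightforward to verify.''
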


\begin{proof}
Let $u:=[sa^2s^{-1}, b^2]$ and $v:=s^{-1}asa^{-2}$.
Then $u$ and $v$ satisfy the hypotheses of Theorem~\ref{thm:image_extension_theorem}.
The reason is as follows.
Note that the group $\mathbf{H}$ given by the presentation
\[
\mathbf{H}=\langle a,b,s \svert [a,b]=1,\ s^{-1}a^2s=a^4 \rangle
\]
can be represented as an HNN-extension of the group
$\mathbf{H}_0=\langle a, b \svert [a, b]=1 \rangle$ with stable letter $s$.
Then by Britton's Lemma,
$u$ and $v$ are nontrivial elements of $\mathbf{H}$
such that $u^2 \neq 1$ in $\mathbf{H}$ and $v$ has infinite order.
Now, let $\psi$ be an endomorphism of $\mathbf{H}$ defined by
\[
\psi(a)=a^2, \quad \psi(b)=b \quad \text{and} \quad \psi(s)=s.
\]
Then it is straightforward to verify that
$u \in \ker \psi \cap \im \psi$ and $v \in \ker \psi \setminus \im \psi$.

Moreover, $\mathbf{H}$ is Hopfian
due to the result of \cite[Theorem~3]{Andreadakis} with $k=0$, $p=2$ and $q=4$.
Also, $\tilde{\psi}$ is surjective, since
$b, s \in \im \psi \subseteq \im \tilde{\psi}$ by the definition of $\psi$,
and since $a=\tilde{\psi}(st[[sas^{-1}, b], x]t^{-1}as^{-1})$.
Thus, the result follows by Corollary~\ref{cor:image_extension_theorem}.
\end{proof}

\end{document}